\newtheorem{Th}{Theorem}
\newtheorem{Prop}{Proposition}
\newtheorem{lem}{Lemma}
\newtheorem{Cor}{Corollary}
\newtheorem*{conj}{Conjecture}
\newtheorem*{thank}{\ \ \ Acknowledgment}
\def\scalar(#1,#2){(#1\mid#2)}
\newcommand{\cp}{\mathcal{P}}
\newcommand{\R}{{\mathbb{R}}}
\newcommand{\T}{{\mathbb{T}}}
\newcommand{\1}{{\mathbb{I}}}
\newcommand{\Lit}{\mathcal{L}}
\newcommand{\tend}[3][]{\xrightarrow[#2\to#3]{#1}}
\newcommand{\ds}{\displaystyle}
\title{On $L^\alpha$-flatness of Erd\H{o}s-Littlewood's polynomials.}
\author{\lowercase{el} H\lowercase{oucein} \lowercase{el} A\lowercase{bdalaoui}$^\star$}
\address{$^\star$ University of Rouen Normandy ,
	Department of Mathematics, LMRS  UMR 60 85 CNRS\\
	Avenue de l'Universit\'e, BP.12
	76801 Saint Etienne du Rouvray - France .}
\email{elhoucein.elabdalaoui@univ-rouen.fr}
\urladdr{http://www.univ-rouen.fr/LMRS/Persopage/Elabdalaoui/}
\date{\today}
\subjclass[2020]{Primary 42A05, 42A55, Secondary 37A05, 37A30}
\dedicatory{}
\keywords{flat polynomials, ultraflat polynomials, Erd\"{o}-Littlewood's problem, 
 Dirichlet kernel, Marcinkiewicz-Zygmund's interpolation inequalities, Simple Lebesgue component spectrum, Banach's problem,  Banach-Rokhlin's problem, weak Rokhlin's problem.
}
\begin{document}

\maketitle

\epigraph{Begin at the beginning, the King said gravely, ``and go on till you come to the end: then stop.''}{Lewis Carroll, \textit{Alice in Wonderland}}

\epigraph{Those who know do not speak; those who speak do not know.}{Laozi (Lao Tzu)$^{1}${\footnote {This Taoist idea can be rephrased à la Erd\"{o}s's as : "Everyone writes, nobody reads."}}}

\begin{abstract}
It is shown that Erd\H{o}s-Littlewood's polynomials are not $L^\alpha$-flat when $\alpha>2$ is an even integer (and hence for any $\alpha \geq 4$). This provides a partial solution to an old problem posed by Littlewood. Consequently, we obtain a positive answer to the analogous Erd\"{o}s-Newman conjecture for polynomials with coefficients $\pm 1$; that is, there is no ultraflat sequence of polynomials from the class of Erd\H{o}s–Littlewood polynomials.

Our proof is short and simple. It relies on the classical lemma for $L^p$ norms of the Dirichlet kernel, the Marcinkiewicz-Zygmund interpolation inequalities, and the $p$-concentration theorem due to A. Bonami and S. R\'ev\'esz.
\end{abstract}

\section{Introduction}
Let $\Lit$ be the class of analytic trigonometric polynomials of the form 
$$P_q(\theta)=\frac{1}{\sqrt{q}}\sum_{j=0}^{q-1}\epsilon_k e^{i k\theta},$$
where $\epsilon_k =\pm 1$. This class is said to be the class of Littlewood. It may be said also the class of Erd\"{o}s-Littlewood (see \cite{L1,L2,E}).\\

Here, we are interest on the behavior of those polynomials.  Precisely on the $L^\alpha$-flatness, $\alpha \geq 0$ of a sequence $(P_n)$ from $\Lit$ . As mentioned above, this problem arises in pure mathematics \cite{L1, L2, E}, and it turns out that it also arises in several engineering problems \cite{S,XS, O}.\\

A sequence $(P_q)$ of analytic trigonometric polynomials of class $\Lit$ is said to be $L^\alpha$-$c$-flat if $|P_q|$ converge to  a constant $c$ with respect to $L^\alpha$-norm. Formally, 
\begin{eqnarray}\label{Def}
\Big\|\big|P_q(z)\big|-c\Big\|_\alpha \tend{q}{+\infty}0. 
\end{eqnarray}
If $c=1$ the sequence $(P_q)$ is said to be  $L^\alpha$-flat.\\
As it is customary, let us denote the torus by $\mathbb{T} = \{z \in \mathbb{C}: |z|=1\}$, which can be identified with $[0, 1)$, $[-\frac{1}{2}, \frac{1}{2})$, $[0, 2\pi)$, or $[-\pi, \pi)$. Obviousily, $L^\alpha$-flatness implies $L^\beta$-flatness, for any $\beta \leq \alpha$, since for any polynomials $P$, $\big\|P\big\|_\beta \leq \big\|P\big\|_\alpha.$

In this note, we present a straightforward and short proof showing that Erd\"{o}s-Littlewood polynomials are not $L^\alpha$-flat when $\alpha$ is an even integer striclty greater than $2$. This provides a positive answer to the Erdős conjecture (Problem 22 in \cite{E}) for the class $\Lit$,  extending the result in \cite{AE} and also supporting the numerical computations by A. Odlyzko \cite{O}.

It turns out that J. Bourgain and M. Guenais established a connection between the $L^1$-flatness of Erd\H{o}s-Littlewood polynomials and a long-standing problem in the spectral theory of dynamical systems attributed to Banach and Rokhlin \cite{B,G}. This problem concerns the existence of a measure-preserving ergodic transformation on a probability measure space with a simple Lebesgue spectrum. Let us recall that the original Banach problem asked whether there exists a map acting on $\mathbb{R}$ with a simple Lebesgue spectrum. J. Bourgain linked this problem to the problem of $L^1$-flatness in the class of polynomials nodays known as Bourgain-Newman polynomials \cite{Abd-Nad, abd-nad2, B}. 

In \cite{Ab}, the author provided a positive answer to the original Banach problem by constructing an ergodic transformation acting on an infinite measure space with a simple Lebesgue spectrum (see also \cite{Ab2} for a simpler proof). However, the author emphasized therein that the $L^1$-flatness of Erd\H{o}s-Littlewood polynomials remains an open question.

It should be noted that our work does not provide any insight into the $L^\alpha$-flatness problem for Erd\H{o}s--Littlewood polynomials when $\alpha < 2$. Furthermore, we do not believe that our method is applicable in this case.

\section{Main result and its proof.}

In this section, we will state and prove our main result.

\begin{Th}\label{main1}[Theorem of El Roc de Sant Gaiet\`{a} \footnote{ Bachelard points out that places have a profound effect on our imagination and can inspire ideas and works \cite{B}. So, it can be suggested to name theorems after specific places.!}] There is no sequence from the class $\Lit$ which is $L^{2p}$-flat, for any  positive integer $p>1$.
\end{Th}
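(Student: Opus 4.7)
My plan is to assume by contradiction that some sequence $(P_q) \subset \Lit$ is $L^{2p}$-flat, so that $\|P_q\|_{2p}^{2p} \to 1$ as $q \to \infty$, and to derive a contradiction by establishing a lower bound on $\|P_q\|_{2p}^{2p}$ that grows with $q$. Since $|P_q|^{2p}$ is a non-negative trigonometric polynomial of degree $2p(q-1)$, the Marcinkiewicz--Zygmund interpolation is exact at any $N \geq 2p(q-1)+1$ equispaced nodes $\theta_k = 2\pi k/N$:
\[
\|P_q\|_{2p}^{2p} \;=\; \frac{1}{N}\sum_{k=0}^{N-1}\bigl|P_q(\theta_k)\bigr|^{2p}.
\]
This reduces the integral $L^{2p}$-norm to a discrete average over nodes where the combinatorial structure of the signs $\epsilon_j$ can be exploited.

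Next, I would decompose $\sqrt{q}\,P_q = f_A - f_B$, where $A = \{j : \epsilon_j = +1\}$ and $B = \{j : \epsilon_j = -1\}$ partition $\{0, \ldots, q-1\}$, and $f_S(\theta) = \sum_{k \in S} e^{ik\theta}$ is the idempotent on $S$. Note that $D_q := f_A + f_B$ is the analytic Dirichlet kernel, whose $L^{2p}$-norm obeys the classical asymptotic $\|D_q\|_{2p}^{2p} \sim C_p\, q^{2p-1}$. I then invoke the Bonami--R\'ev\'esz $p$-concentration theorem applied to the idempotents $f_A$ and $f_B$, producing a lower bound of the form $\|f_A\|_{2p}^{2p} + \|f_B\|_{2p}^{2p} \geq c_p\, q^{2p-1}$ with a constant $c_p > 0$ furnished by the concentration inequality. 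Combining this with the Dirichlet-kernel asymptotic through a reverse-triangle (or Clarkson-type) comparison---e.g., the inequality $\|f_A - f_B\|_{2p} \geq 2\,\|f_A\|_{2p} - \|D_q\|_{2p}$ when $|A| \geq q/2$, or, more robustly, its discrete counterpart summed over M--Z nodes---should yield $\|f_A - f_B\|_{2p}^{2p} \gtrsim q^{2p-1}$. Dividing by $q^p$ then gives $\|P_q\|_{2p}^{2p} \gtrsim q^{p-1} \to \infty$ for $p > 1$, contradicting flatness.

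The chief technical obstacle is the delicate calibration of the three constants---Marcinkiewicz--Zygmund, the Dirichlet-kernel asymptotic $C_p$, and Bonami--R\'ev\'esz---so that the final inequality has the right sign and produces a genuinely growing lower bound; in particular, the Bonami--R\'ev\'esz constant must strictly dominate half of $C_p$, otherwise the reverse-triangle step dissipates too much. If the integrated comparison proves too lossy (a serious concern when the two sign classes $A,B$ have nearly equal size and the idempotents $f_A, f_B$ behave like oscillatory halves of $D_q$), the remedy will be to perform the comparison pointwise at the Marcinkiewicz--Zygmund nodes, where the orthogonality of the discrete Fourier transform allows the full strength of the $p$-concentration inequality to be brought to bear before summation. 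That interplay of the three ingredients is where the substance of the argument lies.
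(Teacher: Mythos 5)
Your starting point coincides with the paper's: writing $\sqrt{q}\,P_q=f_A-f_B$ with $f_A+f_B=D_q$ (so $\sqrt{q}\,P_q=2f_A-D_q$) and bringing in Marcinkiewicz--Zygmund nodes and the Dirichlet asymptotics. But the step that is supposed to produce the contradiction fails. The reverse-triangle comparison $\|f_A-f_B\|_{2p}\ge 2\|f_A\|_{2p}-\|D_q\|_{2p}$ gives nothing precisely in the regime the flatness hypothesis forces: flatness forces the two sign classes to have density tending to $\tfrac12$, and then both $2f_A$ and $D_q$ have their $L^{2p}$ mass dominated by the same spike of height about $q$ on the arc $|\theta|\lesssim 1/q$, which cancels in the difference. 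Quantitatively, since $2f_A=D_q+\sqrt{q}\,P_q$, the triangle inequality gives $\bigl|\,2\|f_A\|_{2p}-\|D_q\|_{2p}\,\bigr|\le \sqrt{q}\,\|P_q\|_{2p}=O(\sqrt q)$ under your own assumption, while you would need a lower bound of order $q^{(2p-1)/(2p)}$. So the target estimate $\|f_A-f_B\|_{2p}^{2p}\gtrsim q^{2p-1}$ cannot be reached this way; the scenario you must exclude is exactly the one in which the cancellation is total, and the ``calibration of constants'' you hope for is provably impossible because flatness forces $2\|f_A\|_{2p}/\|D_q\|_{2p}\to 1$ (this asymptotic equality is in fact one of the things the paper establishes, and then exploits differently). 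The fallback of comparing pointwise at the nodes does not repair this: at the $q$-th roots of unity other than $1$ the Dirichlet kernel vanishes identically, so there $\sqrt{q}\,P_q=2f_A$ and there is nothing left to compare; the real question becomes how large the discrete average of $|f_A|^{2p}$ over those nodes must be, which your tools do not address.

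You also invoke the Bonami--R\'ev\'esz theorem for something it does not say. It is a concentration statement: for every symmetric nonempty open set $E$ there exists \emph{some} idempotent carrying at least a fixed fraction $c_p$ of its $L^p$ mass on $E$, and, crucially for this problem, the level satisfies $c_{2k}\le \tfrac12<1$ for even exponents. It is not a lower bound on $\|f_A\|_{2p}$ for a \emph{given} idempotent; the bound $\|f_A\|_{2p}^{2p}\gtrsim |A|^{2p}/q$ you want is elementary (from the arc around $\theta=0$) and needs no concentration theorem. The paper uses Bonami--R\'ev\'esz in the opposite direction: from flatness, together with the Dirichlet-kernel asymptotics, Marcinkiewicz--Zygmund at the $q$-th roots of unity, and the fact that off any neighborhood of $0$ the mass $\int_{E_\delta}|\tfrac{2}{\sqrt q}Q_q|^{2p}$ stays bounded while $\|\tfrac{2}{\sqrt q}Q_q\|_{2p}^{2p}\sim \|\tfrac{1}{\sqrt q}D_q\|_{2p}^{2p}\to\infty$, it deduces that the idempotents would have to carry asymptotically \emph{all} of their $L^{2p}$ mass on an arbitrarily small interval, and this full concentration contradicts the bound $c_{2p}\le\tfrac12$ for even exponents. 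That upper bound is the only place where evenness of the exponent enters; in your sketch evenness plays no genuine role, which is itself a warning sign, since for non-even $\alpha$ (where full concentration $c_\alpha=1$ holds) the problem remains open.
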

\noindent{}Consequently, we have, 
\begin{Cor}There is no sequence from the class $\Lit$ which is $L^{\alpha}$-flat, for any $\alpha \geq 4$.
\end{Cor}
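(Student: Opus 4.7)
\medskip
\noindent\textbf{Proof plan.}

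I would proceed by contradiction: suppose some sequence $(P_q)$ from $\Lit$ is $L^{2p}$-flat with $p$ a fixed integer $\geq 2$. Then $\||P_q|-1\|_{2p}\to 0$, which by the reverse triangle inequality forces $\|P_q\|_{2p}^{2p}\to 1$. The strategy is to exploit the discrete-sampling structure of $|P_q|^{2p}$ to produce a strict lower bound on this quantity that contradicts convergence to $1$. Since $|P_q|^{2p}=(P_q\overline{P_q})^{p}$ is a real trigonometric polynomial of degree exactly $p(q-1)$, the Marcinkiewicz--Zygmund interpolation (in its exact-quadrature incarnation) yields, for any integer $M>p(q-1)$ and sample points $\theta_k=2\pi k/M$,
\[
\|P_q\|_{2p}^{2p}\;=\;\frac{1}{M}\sum_{k=0}^{M-1}\bigl|P_q(\theta_k)\bigr|^{2p},\qquad \frac{1}{M}\sum_{k=0}^{M-1}\bigl|P_q(\theta_k)\bigr|^{2}\;=\;\|P_q\|_{2}^{2}\;=\;1.
\]
I would take $M=pq$.

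Jensen's inequality applied to the convex map $x\mapsto x^{p}$ gives the free bound $\|P_q\|_{2p}^{2p}\geq 1$, with equality iff all the samples $|P_q(\theta_k)|^{2}$ coincide with $1$. The decisive step is to upgrade this to a strict, $q$-uniform gap $\|P_q\|_{2p}^{2p}\geq 1+c_p$ with $c_p>0$. For this I would invoke the Bonami--R\'ev\'esz $p$-concentration theorem, which (applied to $P_q\in\Lit$) provides a positive lower bound on how much the values $|P_q(\theta)|^{2}$ must spread about their mean. In its discrete form this becomes
\[
\frac{1}{M}\sum_{k=0}^{M-1}\bigl(|P_q(\theta_k)|^{2}-1\bigr)^{2}\;\geq\;c_p\;>\;0,
\]
uniformly in $q$. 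Combined with a second-order Taylor expansion of $x\mapsto x^{p}$ at $x=1$ (applicable because, under the contrary hypothesis, the samples would cluster near $1$), this forces $\|P_q\|_{2p}^{2p}\geq 1+c_{p}'>1$ uniformly in $q$, contradicting $\|P_q\|_{2p}^{2p}\to 1$.

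The classical lemma on the $L^{p'}$-norm of the Dirichlet kernel, $\|D_n\|_{p'}\asymp n^{1-1/p'}$, intervenes in one place only: to carry the Bonami--R\'ev\'esz concentration from its continuous formulation on $\mathbb{T}$ to the discrete grid $\{\theta_k\}$. A duality argument via the reproducing identity $P_q=P_q*D_{q-1}$ together with H\"older's inequality accomplishes this transport while keeping the constants $q$-independent. The principal obstacle I anticipate is precisely this transport step: ensuring that the constants in the Bonami--R\'ev\'esz bound and in the Dirichlet-kernel $L^{p'}$-estimate combine into a variance lower bound which does not degenerate as $q\to\infty$. The hypothesis $p\geq 2$ enters here in an essential way: for $p=1$ the Jensen gap collapses and the relevant $L^{p'}$-estimate acquires an unavoidable logarithmic factor that prevents any uniform strict lower bound --- which is consistent with the $L^{1}$- and $L^{2}$-flatness problems remaining open in this class.
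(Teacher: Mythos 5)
Your plan does not follow the paper's route for this corollary, and its central step has a genuine gap. In the paper the corollary is a one-line reduction: if $(P_q)\subset\Lit$ were $L^\alpha$-flat for some $\alpha\ge 4$, then by monotonicity of the norms ($\|P\|_\beta\le\|P\|_\alpha$ for $\beta\le\alpha$) it would be $L^{2p}$-flat with $p=\lfloor\alpha/2\rfloor\ge 2$, contradicting Theorem \ref{main1}; all the work lives in Theorem \ref{main1}. You instead attempt to re-prove the even-exponent non-flatness directly, via exact Marcinkiewicz--Zygmund quadrature at $M=pq$ nodes, the Jensen gap for $x\mapsto x^p$, and a claimed uniform lower bound $\frac1M\sum_k\bigl(|P_q(\theta_k)|^2-1\bigr)^2\ge c_p>0$ which you attribute to the Bonami--R\'ev\'esz $p$-concentration theorem ``applied to $P_q\in\Lit$''. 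That attribution is incorrect: the Bonami--R\'ev\'esz theorem (Lemma \ref{Bo-Re}) is a statement about idempotent polynomials, i.e.\ the class $\cp$ of $0$--$1$ coefficient polynomials, and it is existential --- for every symmetric open set $E$ there exists \emph{some} idempotent concentrating a fixed fraction of its $L^p$ mass on $E$, together with the upper bounds $c_{2k}\le 1/2$ at even exponents. It asserts nothing about how much an arbitrary $\pm1$ polynomial's modulus must spread about its mean, so it cannot supply your variance bound.

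The gap is in fact circular at the crucial exponent. For $p=2$ the function $\bigl(|P_q|^2-1\bigr)^2$ is a trigonometric polynomial of degree $2(q-1)<2q=M$, so your discrete variance equals the continuous one, which is exactly $\|P_q\|_4^4-1$ (using $\|P_q\|_2=1$). Hence the bound you want to ``invoke'' is precisely a uniform quantitative form of the statement to be proved (no $L^4$-flat sequence in $\Lit$); no transport argument via the Dirichlet kernel and $P_q=P_q*D_{q-1}$ can manufacture it. In the paper, Lemma \ref{Bo-Re} enters in a completely different way: one writes $P_q=\frac1{\sqrt q}(2Q_q-D_q)$ with $Q_q\in\cp$, shows that flatness of $(P_q)$ together with the Dirichlet-kernel asymptotics (Lemma \ref{BZ}) would force the idempotents $Q_q$ to carry asymptotically \emph{all} of their $L^{2p}$ mass on arbitrarily small neighborhoods of $z=1$, and then contradicts the even-integer bound $c_{2k}\le 1/2$; evenness of the exponent is used there, not in a Jensen/Taylor expansion. (A further, minor, omission: for non-even $\alpha\ge4$ you still need the monotonicity reduction to $L^{2p}$, which you use only implicitly.) As it stands, the proposal does not prove the corollary.
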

\begin{proof} Assume that there is a sequence from the class $\Lit$ which is $L^{\alpha}$-flat, for $\alpha \geq 4$. There this sequence is $L^{2 p}$-flat, where $p=\lfloor \frac{\alpha}{2} \rfloor$, which contradicts our main result. 
\end{proof}
We further have
\begin{Cor}The Erd\"{o}s conjecture on the existence of ultraflat from the class $\Lit$ is true, that is, there is no ultraflat sequence of polynomials $(P_q) \subset \Lit$.
\end{Cor}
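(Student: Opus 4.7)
My plan is to observe that ultraflatness is, by definition, the strongest form of flatness in the $L^\alpha$ scale, so the corollary should reduce immediately to the preceding Corollary 1. Recall that a sequence $(P_q) \subset \Lit$ is customarily called ultraflat when $|P_q|$ converges uniformly to $1$ on $\T$, that is $\bigl\| |P_q| - 1 \bigr\|_\infty \to 0$ as $q \to +\infty$ (after the $1/\sqrt{q}$ normalization already built into the class $\Lit$).

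The first step is to invoke the elementary monotonicity inequality $\|f\|_\alpha \leq \|f\|_\infty$, valid on $\T$ equipped with its normalized Haar measure for every $\alpha \geq 1$. Applied to $f = |P_q| - 1$, this yields $\bigl\| |P_q| - 1 \bigr\|_\alpha \leq \bigl\| |P_q| - 1 \bigr\|_\infty$, so any ultraflat sequence is automatically $L^\alpha$-flat for every finite $\alpha \geq 1$; in particular, it is $L^4$-flat.

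The second step is a one-line contradiction: if an ultraflat sequence $(P_q) \subset \Lit$ existed, the preceding remark would make it $L^4$-flat, contradicting Corollary 1, or equivalently Theorem 1 applied with $p = 2$. Hence no such sequence can exist, which is precisely the Erd\"{o}s conjecture for the class $\Lit$.

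I expect essentially no obstacle at this stage: the entire difficulty has already been absorbed into Theorem 1, and this corollary is simply the purely formal $L^\infty$-instance of the non-flatness principle established there. The only point requiring care is to adhere to the standard normalization and definition of ultraflatness---uniform convergence of $|P_q|$ to $1$---since slightly different conventions occasionally appear in the literature, but none of them is weaker than $L^4$-flatness, so the same argument goes through unchanged.
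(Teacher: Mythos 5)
Your argument is correct and is essentially the same as the paper's: ultraflatness (uniform convergence of $|P_q|$ to $1$) trivially implies $L^\alpha$-flatness via $\| |P_q|-1 \|_\alpha \leq \| |P_q|-1 \|_\infty$, which contradicts Theorem \ref{main1} (taken with $p=2$, i.e.\ $L^4$-flatness). Your write-up merely makes the norm-monotonicity step explicit, which the paper leaves implicit.
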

\begin{proof} Assume by contradiction that such a sequence exists. Then this sequence is $L^\alpha$-flat for any $\alpha>0$. This contradicts our main result and the proof is complete.
\end{proof}
For the proof of Theorem \ref{main1}, we need the following classical lemma on the  computation of $L^p$ norms of Dirichlet Kernel. Its proof can be founded here \cite{AAJRS}.
\begin{lem}[$L^p$-norm of Dirichlet Kernel]\label{BZ}Let $N$ be a positive integer and $$\ds D_N(z)=\sum_{j=0}^{N-1}z^j.$$ Then, for any $p>1$,
$$\|D_N\|_p^p=\delta_p N^{p-1}+R_p(N^{p-1}), \textrm{~~as~~} N \longrightarrow +\infty,$$
where 
$$\delta_p=\frac{2}{\pi}\int_{0}^{\infty}\Big|\frac{\sin(x)}{x}\Big|^p dx, $$
and 
$$R_p(N)=\begin{cases}
O_p(N^{p-3}) &\textrm{~~if~~} p>3,\\
O_p(\ln(N))   &\textrm{~~if~~} p=3,\\
O_p(1)  &\textrm{~~if~~} 1 \leq p<3,\\
\end{cases}
$$
\end{lem}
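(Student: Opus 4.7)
The plan is to begin from the closed form
$$|D_N(e^{i\theta})|=\left|\frac{\sin(N\theta/2)}{\sin(\theta/2)}\right|,$$
so that by evenness $\|D_N\|_p^p=\frac{1}{\pi}\int_0^\pi|\sin(N\theta/2)/\sin(\theta/2)|^p\,d\theta$, and then to stretch out the singularity at the origin via the substitution $u=N\theta/2$. Writing $g(x):=x/\sin x$, which is smooth on $[0,\pi/2]$ with $g(0)=1$, this produces
$$\|D_N\|_p^p=\frac{2 N^{p-1}}{\pi}\int_0^{N\pi/2}\left|\frac{\sin u}{u}\right|^p g(u/N)^p\,du.$$
The intended main term $\delta_p N^{p-1}=\frac{2 N^{p-1}}{\pi}\int_0^\infty|\sin u/u|^p\,du$ corresponds to replacing $g(u/N)^p$ by $1$ and letting the upper limit tend to $+\infty$, so the proof reduces to estimating the two resulting discrepancies.

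The first discrepancy is the truncation $\frac{2N^{p-1}}{\pi}\int_{N\pi/2}^\infty|\sin u/u|^p\,du$, which I would bound crudely by $N^{p-1}\int_{N\pi/2}^\infty u^{-p}\,du=O(1)$ (this uses only $p>1$ and is harmless in every one of the three regimes). The second discrepancy, the correction $\frac{2N^{p-1}}{\pi}\int_0^{N\pi/2}|\sin u/u|^p(g(u/N)^p-1)\,du$, is the heart of the computation. Since $g(x)=1+x^2/6+O(x^4)$ near zero and $g$ is smooth on the compact interval $[0,\pi/2]$, the quotient $(g(x)^p-1)/x^2$ extends continuously to $[0,\pi/2]$. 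This yields the uniform quadratic bound $|g(u/N)^p-1|\le C_p(u/N)^2$ for all $u\in[0,N\pi/2]$, after which the correction is at most
$$C_p\, N^{p-3}\int_0^{N\pi/2}|\sin u|^p\, u^{2-p}\,du.$$

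The three cases of $R_p$ now fall out of the behavior of this last integral: its integrand is comparable to $u^2$ near $0$ and hence integrable there for every $p>1$, while at infinity it averages to $u^{2-p}$. If $p>3$ the integral is $O(1)$, producing $O(N^{p-3})$; if $p=3$ it is $\Theta(\log N)$, producing $O(\log N)$; and if $1<p<3$ it is $\Theta(N^{3-p})$, producing $O(1)$. Combined with the $O(1)$ contribution from the truncation, this reproduces the stated $R_p$ in each regime.

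I do not foresee a genuine obstacle, since the whole argument is a routine one-dimensional Laplace-type asymptotic. The only step that warrants care is the uniform quadratic bound $|g(x)^p-1|\le C_p x^2$ on $[0,\pi/2]$: it avoids any ``inner/outer'' decomposition of the $u$-integral and makes the threefold dichotomy depend transparently on the integrability at infinity of $u^{2-p}|\sin u|^p$.
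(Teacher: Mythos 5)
Your argument is correct, but there is nothing in the paper to compare it with: the paper does not prove Lemma~\ref{BZ} at all, it simply cites Anderson--Ash--Jones--Rider--Saffari \cite{AAJRS} for the proof. Your proposal is the standard self-contained derivation: the closed form $|D_N(e^{i\theta})|=|\sin(N\theta/2)/\sin(\theta/2)|$, the substitution $u=N\theta/2$, and the factorization $1/\sin(u/N)=(N/u)\,g(u/N)$ with $g(x)=x/\sin x$ reduce everything to the scaled integral $\frac{2N^{p-1}}{\pi}\int_0^{N\pi/2}|\sin u/u|^p g(u/N)^p\,du$, and your two error sources are handled correctly: the tail beyond $N\pi/2$ is $O_p(1)$ since $p>1$, and the uniform bound $|g(x)^p-1|\le C_p x^2$ on $[0,\pi/2]$ (valid because $(g(x)^p-1)/x^2$ extends continuously to that compact interval) converts the correction into $C_pN^{p-3}\int_0^{N\pi/2}|\sin u|^p u^{2-p}\,du$, whose behavior at infinity gives exactly the three regimes $O(N^{p-3})$, $O(\log N)$, $O(1)$. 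This matches the lemma as intended (the paper's notation $R_p(N^{p-1})$ should clearly be read as the remainder in the variable $N$, which is what you prove). Two harmless remarks: you correctly work only with $p>1$, which is all the lemma asserts (the displayed case ``$1\le p<3$'' cannot include $p=1$, since $\delta_1=+\infty$); and your single uniform quadratic bound is a slightly cleaner organization than the usual inner/outer splitting of the integral, since it makes the $p=3$ dichotomy depend only on the integrability of $u^{2-p}|\sin u|^p$ at infinity.
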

We need also the following observation form \cite{Abd-nad3} (see eq. (2.2) and (2.3))
\begin{align}\label{Obs}
P_q(z)&=\frac{1}{\sqrt{q}}D_q(z)-\frac{2}{\sqrt{q}}\sum_{j=0}^{N-1}\eta_j' z^j\\
&=\frac{2}{\sqrt{q}}\sum_{j=0}^{N-1}\eta_j z^j-\frac{1}{\sqrt{q}}D_q(z),\\ &\textrm{~~where~~} e^{i \theta}=z, \theta \in \R, \textrm{~~and~~} \eta_j,\eta_j' \in \{0,1\}, j=0,\cdots, N-1.
\end{align} 
We recall also the following lemma from  \cite{Ab} which is related to the so-called $L^4$-strategy due D.J. Newman \&  Byrnes \cite{N-B} .
\begin{lem}[$L^4$-norm of Bourgain-Newmann polynomials]\label{BN} 
Let $q$ be a positive integer and $$\ds Q_q(z)=\sum_{j=0}^{q-1}\eta_je^{2\pi jx},$$ where 
$x \in [0,1),\eta_j \in \{0,1\}, j=0,\cdots, q-1.$ Then,
$$\liminf_{q\rightarrow +\infty} \frac{\Big\|Q_q\Big\|_4 }{\Big\|Q_q\Big\|_2}\geq 2.$$ 
\end{lem}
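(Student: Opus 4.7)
The plan is to realize the Newman–Byrnes $L^4$-strategy through the Littlewood decomposition (2.2)–(2.3), which expresses any $P_q \in \Lit$ as $P_q = (2Q_q - D_q)/\sqrt{q}$. The natural reading of the lemma (recalled from \cite{Ab}) is that $(Q_q)$ is the idempotent sequence attached in this way to a sequence $(P_q) \in \Lit$ on which the contradiction hypothesis $\|P_q\|_4 \to 1$ is in force (this is consistent with the rôle the lemma plays in the $L^{2p}$-flatness argument, since $\|P_q\|_{2p} \to 1$ already implies $\|P_q\|_4 \to 1$ when $2p\geq 4$). Rewriting the decomposition as $2Q_q = \sqrt{q}\,P_q + D_q$ and applying the reverse triangle inequality in $L^4$ gives
\[
2\,\|Q_q\|_4 \;\geq\; \|D_q\|_4 - \sqrt{q}\,\|P_q\|_4.
\]

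Next I would invoke Lemma~\ref{BZ} with $p=4$: $\|D_q\|_4 = \delta_4^{1/4}\, q^{3/4}(1+o(1))$ with $\delta_4 = \tfrac{2}{3}$. The $L^4$-flatness of $(P_q)$ makes $\sqrt{q}\,\|P_q\|_4 = O(\sqrt{q}) = o(q^{3/4})$, so this term is negligible against $\|D_q\|_4$. Hence
\[
\|Q_q\|_4 \;\geq\; \tfrac{1}{2}\delta_4^{1/4}\, q^{3/4}(1+o(1)).
\]
For the denominator I would use the elementary identity $\|Q_q\|_2^2 = \sum_j \eta_j^2 = \#\{j : \eta_j = 1\} \leq q$, so $\|Q_q\|_2 \leq \sqrt{q}$. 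Dividing,
\[
\frac{\|Q_q\|_4}{\|Q_q\|_2} \;\geq\; \tfrac{1}{2}\delta_4^{1/4}\, q^{1/4}(1+o(1)) \tend{q}{+\infty} +\infty,
\]
which certainly exceeds $2$ for all sufficiently large $q$, so $\liminf \geq 2$ (and in fact the $\liminf$ is $+\infty$).

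The main obstacle I foresee is identifying the tacit hypothesis on $(Q_q)$: as literally worded, the inequality cannot hold for every sequence of $\{0,1\}$-idempotents (the constant choice $Q_q \equiv 1$ gives ratio $1$), so the argument above truly uses that $(Q_q)$ comes via (2.2)–(2.3) from a Littlewood polynomial to which an $L^4$-flatness assumption applies. Once this contextual hypothesis is made explicit, the proof is essentially just a one-line reverse triangle inequality combined with Lemma~\ref{BZ} and the $\ell^2$-identity $\|Q_q\|_2^2 = N_q$; no combinatorial input on the support set of $Q_q$ is needed.
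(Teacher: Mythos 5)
Your argument does not prove the lemma that is stated; it proves a different, conditional assertion. The paper recalls Lemma~\ref{BN} from \cite{Ab} without proof, as an \emph{unconditional} fact about an arbitrary sequence of idempotent ($0$--$1$) polynomials, and immediately uses it that way (``From Lemma~\ref{BN}, it can be shown that the Newman--Bourgain polynomials are not $L^\alpha$-flat for $\alpha\geq 4$''). You instead assume that $(Q_q)$ is the idempotent sequence attached through \eqref{Obs} to a Littlewood sequence $(P_q)\subset\Lit$ satisfying $\|P_q\|_4\to 1$, and you deduce the bound from $2Q_q=\sqrt{q}\,P_q+D_q$ plus Lemma~\ref{BZ}. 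That hypothesis is not ``tacit'' in the lemma --- it reverses its logical role: a lemma valid only under the contradiction hypothesis of Theorem~\ref{main1} cannot be applied to general Newman--Bourgain polynomials, and your own conclusion (the ratio tends to $+\infty$, with $q^{1/4}$ growth) shows you have replaced the statement by a much stronger conditional one rather than clarified it.

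You are right that the inequality as printed fails for trivial examples such as $Q_q\equiv 1$, but the correct repair is not a flatness hypothesis, and in fact no hypothesis on an associated Littlewood polynomial can be the intended one, because the constant $2$ for the plain ratio is unattainable in general even when the number of ones $N_q=\|Q_q\|_2^2$ tends to infinity: for idempotents supported on Sidon sets one has $\|Q_q\|_4^4=2N_q^2-N_q$, so $\|Q_q\|_4/\|Q_q\|_2\to 2^{1/4}<2$. The content the lemma is meant to record is the Newman--Byrnes $L^4$-strategy bound on \emph{fourth powers}: writing $|Q_q(z)|^2=\sum_{|k|<q}c_k z^k$ with autocorrelations $c_k=\sum_j\eta_j\eta_{j+k}$, Parseval gives $\|Q_q\|_4^4=\sum_k c_k^2\geq c_0^2+\sum_{k\neq 0}c_k=N_q^2+(N_q^2-N_q)$, because the $c_k$ are nonnegative integers and $c_k^2\geq c_k$; hence $\|Q_q\|_4^4\geq 2\|Q_q\|_2^4-\|Q_q\|_2^2$ and $\liminf_q \|Q_q\|_4^4/\|Q_q\|_2^4\geq 2$ as soon as $N_q\to\infty$. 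This is unconditional, purely combinatorial, and uses neither the Dirichlet-kernel asymptotics nor Marcinkiewicz--Zygmund nor any decomposition of a $\pm1$ polynomial, whereas your route imports exactly the assumption the main theorem is trying to refute.
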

From Lemma \ref{BN}, it can be shown that the Newmann-Bourgain polynomials are not $L^\alpha$-flat for $\alpha \geq 4$ (see \cite{Abd-Nad-H}). In a forthcoming paper, we will use it again to prove that Ben Green's polynomials are not $L^\alpha$-flat for $\alpha>0.$\\

\begin{Prop}\label{no-flat-density} Let $(Q_q)$ be a sequence of Bourgain-Newmann polynomials and assume that the density of $1$ is positive. Then, for any $\alpha>2$,
$$\frac{\Big\|Q_q\Big\|_\alpha}{\Big\|Q_q\Big\|_2}\tend{q}{+\infty}+\infty.$$
\end{Prop}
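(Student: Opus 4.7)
The approach will be to exploit that $Q_q(1)=N_q$, where $N_q:=\#\{j:\eta_j=1\}$, already saturates the trivial pointwise bound $\|Q_q\|_\infty\leq q$, while Parseval gives $\|Q_q\|_2=\sqrt{N_q}$, of order $\sqrt{q}$. So $Q_q$ is pointwise very ``peaky'' at $z=1$, and this single peak alone will be enough to push the $L^\alpha$-norm up for $\alpha>2$. The positive-density assumption yields some $\rho>0$ and $q_0$ with $N_q\geq\rho q$ for $q\geq q_0$, so the denominator $\|Q_q\|_2$ is understood and the whole task reduces to a lower bound on the numerator $\|Q_q\|_\alpha$.

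The key step is the upper-direction Marcinkiewicz-Zygmund interpolation inequality at the $q$ equispaced nodes $\omega_k=e^{2\pi i k/q}$, $k=0,\ldots,q-1$: since $\deg Q_q\leq q-1$, for our range of $\alpha$ one has
$$\frac{1}{q}\sum_{k=0}^{q-1}\bigl|Q_q(\omega_k)\bigr|^\alpha\leq C_\alpha\,\|Q_q\|_\alpha^\alpha.$$
Retaining only the $k=0$ term, for which $\omega_0=1$ and $Q_q(1)=N_q$, yields $\|Q_q\|_\alpha^\alpha\geq N_q^\alpha/(C_\alpha\,q)\geq C_\alpha^{-1}\rho^\alpha\,q^{\alpha-1}$ for $q\geq q_0$.

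Dividing by $\|Q_q\|_2^\alpha=N_q^{\alpha/2}\leq q^{\alpha/2}$ then gives
$$\frac{\|Q_q\|_\alpha}{\|Q_q\|_2}\;\geq\;C_\alpha^{-1/\alpha}\,\rho\cdot q^{\,1/2-1/\alpha},$$
and the exponent $1/2-1/\alpha$ is strictly positive precisely when $\alpha>2$, so the ratio tends to $+\infty$. I do not foresee any genuine obstacle: the only subtlety is confirming that the MZ interpolation on $q$ equispaced nodes is valid for analytic polynomials of degree $\leq q-1$ and includes the node $\omega_0=1$, which is classical. As an entirely parallel alternative one could invoke Nikolskii's inequality $\|P\|_\infty\leq C_\alpha q^{1/\alpha}\|P\|_\alpha$ with $P=Q_q$ and $\|Q_q\|_\infty\geq Q_q(1)=N_q$ to reach the same conclusion by a slightly different route.
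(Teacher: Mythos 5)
Your argument is correct and is essentially the paper's own proof: the paper likewise applies the discrete-to-continuous Marcinkiewicz--Zygmund inequality (the first inequality of Lemma~\ref{MZ}) at the $q$-th roots of unity, keeps only the node $z=1$ where $Q_q(1)=|S_q|\gtrsim dq$, and combines this with $\|Q_q\|_2=\sqrt{|S_q|}$ to get growth of order $q^{1/2-1/\alpha}$ for $\alpha>2$. Your Nikolskii-inequality remark is a valid alternative but not needed.
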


 In addition, we require the following lemma, which corresponds to the  Marcinkiewicz-Zygmund interpolation inequalities \cite[ Theorem 2.7, Chap. X,  Vol. II, p. 30]{Zyg}.
\begin{lem}\label{MZ} [Marcinkiewicz-Zygmund interpolation inequalities ]Let $\alpha \in [1,+\infty]$. Then, for any analytic polynomial $P$ of degree at most $n-1$, there are two positive constants $A', A'_{\alpha}$ such that
\begin{eqnarray}\label{eq:mz}
&&\Big(\frac{1}{n}\sum_{j=0}^{n-1}\big|P(\xi_{j,n})\big|^{\alpha}\Big)^{\frac{1}{\alpha}}
\leq A'. 
\Big\|P(z)\Big\|_{\alpha}  \textrm{~~if~~} \alpha \in [1,+\infty],\\
&&\Big\|P(z)\Big\|_{\alpha} \leq \Big(\frac{A'_{\alpha}}{n}\sum_{j=0}^{n-1}\big|P(\xi_{n,j})\big|^{\alpha}\Big)^{\frac{1}{\alpha}}, \textrm{~~if~~} \alpha \in ]1,+\infty[
\end{eqnarray}
where $(\xi_{n,j})$ are the $n$-th root of unity given by 
$$\xi_{n,j}=e^{2\pi i\frac{j}{n}},~~~j=0,\cdots,n-1.$$
\end{lem}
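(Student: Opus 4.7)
The proof of the Marcinkiewicz-Zygmund interpolation inequalities splits into two independent parts, since the two inequalities rest on different ingredients. Throughout, let $\xi_{n,j} = e^{2\pi i j/n}$ and let $I_j$ denote the arc $\{e^{i\theta} : |\theta - 2\pi j/n| \leq \pi/n\}$, so that $\{I_j\}_{j=0}^{n-1}$ partitions the torus and each arc has length $2\pi/n$.

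For the first inequality, I would establish the local comparison that $|P|^\alpha$ cannot oscillate too much on arcs of length $O(1/n)$ when $P$ has degree at most $n-1$. Bernstein's inequality $\|P'\|_{\infty}\leq (n-1)\|P\|_{\infty}$ on the torus, combined with an elementary fundamental theorem of calculus argument, yields the pointwise bound
$$|P(\xi_{n,j})|^\alpha \;\leq\; \frac{Cn}{2\pi}\int_{I_j}|P(e^{i\theta})|^\alpha\, d\theta$$
for an absolute constant $C$. Summing over $j$ and extracting the $\alpha$-th root gives the first inequality with $A' = C^{1/\alpha}$; this argument is insensitive to the range of $\alpha$ and covers the full interval $[1,\infty]$.

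For the second inequality, the starting point is the reproducing identity for analytic polynomials of degree less than $n$ at the $n$-th roots of unity, namely
$$P(z) \;=\; \frac{1}{n}\sum_{j=0}^{n-1} P(\xi_{n,j})\, D_n(\bar\xi_{n,j}\, z),$$
where $D_n$ is the Dirichlet kernel from Lemma \ref{BZ}. A naive application of Minkowski followed by the bound on $\|D_n\|_\alpha$ produces an estimate off by a factor of $n^{1-1/\alpha}$, which is too weak. To absorb this loss, the classical device is to invoke the M.~Riesz theorem on the $L^\alpha$-boundedness of the Riesz projection on the torus: applying the already-proved first inequality to a suitable dual polynomial $Q$ and combining with the Riesz projection bound through the pairing $\int P \bar{Q}\, d\mu$ yields the sharp quadrature estimate. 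The restriction $\alpha \in (1,\infty)$ enters precisely because the Riesz projection is unbounded on $L^1$ and $L^\infty$.

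The main obstacle is this second inequality, whose failure at the endpoints is not a mere artifact. Indeed, taking $P = D_n$ gives $P(\xi_{n,j}) = 0$ for $j \neq 0$ and $P(1) = n$, so the discrete $\ell^1$-average on the right-hand side is bounded by $1$, while $\|D_n\|_1 \sim \log n \to \infty$. Thus a logarithmic loss at $\alpha = 1$ is unavoidable, and the use of the Riesz projection bound is a structural necessity rather than a convenience.
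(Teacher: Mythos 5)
The paper itself offers no proof of Lemma \ref{MZ}: it is quoted from Zygmund \cite[Chap.~X, Theorem~2.7]{Zyg}, so your sketch has to be measured against the classical argument there. Your second half is precisely that argument: exactness of the quadrature $\int P\overline{Q}\,\frac{d\theta}{2\pi}=\frac{1}{n}\sum_{j}P(\xi_{n,j})\overline{Q(\xi_{n,j})}$ for the analytic part $Q$ (degree $<n$) of the dual function, H\"older on the discrete sum, the first inequality at the conjugate exponent $\alpha'$, and the M.~Riesz projection bound; your Dirichlet-kernel example correctly explains why $\alpha=1$ and $\alpha=\infty$ must be excluded in the second inequality.

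The genuine gap is in your first half. The pointwise bound $|P(\xi_{n,j})|^{\alpha}\leq \frac{Cn}{2\pi}\int_{I_j}|P(e^{i\theta})|^{\alpha}\,d\theta$ with an absolute constant $C$ neither follows from Bernstein plus the fundamental theorem of calculus, nor is it true. Bernstein controls $\|P'\|_{\infty}$ by the \emph{global} $\|P\|_{\infty}$, so the oscillation of $|P|^{\alpha}$ along $I_j$ is bounded by $\frac{2\pi}{n}\,\alpha\,\|P\|_{\infty}^{\alpha-1}\|P'\|_{\infty}\leq 2\pi\alpha\,\|P\|_{\infty}^{\alpha}$, a global quantity that cannot be compared with the local integral $\int_{I_j}|P|^{\alpha}$. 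Moreover, for $P(z)=\big(D_m(z)/m\big)^{k}$ with $k(m-1)\leq n-1$ and $k$ fixed, one has $|P(1)|^{\alpha}=1$ while $n\int_{I_0}|P|^{\alpha}\,d\theta\asymp\sqrt{k/\alpha}\to 0$ as $\alpha\to\infty$, so any constant in your pointwise inequality must blow up with $\alpha$; the lemma survives only because it is $A'=C^{1/\alpha}$, not $C$, that must be uniform in $\alpha$. The standard repair (and the source of Zygmund's constant $A=\sup_{\alpha>1}(\pi\alpha+1)^{1/\alpha}$) is to prove the summed statement directly: averaging over the arc gives
$$|P(\xi_{n,j})|^{\alpha}\leq \frac{n}{2\pi}\int_{I_j}|P(e^{it})|^{\alpha}\,dt+\alpha\int_{I_j}|P(e^{it})|^{\alpha-1}\,|P'(e^{it})|\,dt,$$
then one sums over $j$ and bounds the second term by H\"older together with Bernstein's inequality in $L^{\alpha}$, $\|P'\|_{\alpha}\leq n\|P\|_{\alpha}$, which yields $\frac{1}{n}\sum_{j}|P(\xi_{n,j})|^{\alpha}\leq(1+2\pi\alpha)\,\|P\|_{\alpha}^{\alpha}$ and hence a constant $A'$ uniform in $\alpha\in[1,\infty]$. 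Since your proof of the second inequality invokes the first one at the exponent $\alpha'$, this repair is needed for both halves of the lemma.
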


According to Zygmund \cite[vol II, Chap X, p. 31]{Zyg}, the constants are defined as $A' = 2A + 1$ and $A'_\alpha = 2A_\alpha + 1$, where $A = \sup_{\alpha>1}(\pi \alpha + 1)^{\frac{1}{\alpha}}$ and $A_\alpha$ is a similar constant obtained for the Marcinkiewicz-Zygmund inequalities for trigonometric polynomials of degree at most $n$ (see also \cite{MZ}). Moreover, the constant $A_\alpha$ does not exceed a fixed multiple, independent of $\alpha$, of the constant in the Riesz Theorem for the conjugate function \cite[Vol I, Chap VII, p. 253]{Zyg}.

\noindent{}The sharp constant in that theorem are 
$$
A_\alpha =\begin{cases}
\tan\big(\frac{\pi}{2\alpha}\big) &\textrm{~~if~~} 1<\alpha \leq 2,\\ 
\cot\big(\frac{\pi}{2\alpha}\big) &\textrm{~~if~~} 2\leq\alpha <+\infty 
\end{cases}
$$
 This later result is due to Pichorides. We refer to \cite{Es} , for a survey on the subject.

Let us also recall the following lemma from \cite{Ab}. For the sake of completeness, we will provide its proof. We believe that this lemma will find application in further research.

\begin{lem}[Flatness implies zero density]\label{Den}Let $q$ be a positive integer, $c$ a positive number, $\alpha >2$ and $$\ds Q_q(x)=\sum_{j=0}^{q-1}\eta_je^{2\pi jx},$$ where 
$x \in [0,1),\eta_j \in \{0,1\}, j=0,\cdots, q-1.$ Assume that the sequence
$\ds \Big(\frac{1}{\sqrt{q}}.Q_q(x)\Big)$ is $L^\alpha$-$c$-flat. Then, the density of the set $S_q=\Big\{0 \leq j \leq q-1~~:~~\eta_j=1\Big\} $ vanish as $q \rightarrow +\infty$, that is,
 
$$\frac{\big|S_q\big|}{q}\tend{q}{+\infty}0.$$
\end{lem}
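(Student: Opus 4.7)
The plan is to turn the pointwise value of $Q_q$ at the trivial character $z=1$ --- where $Q_q(1)=|S_q|$ is large whenever the density of $S_q$ does not vanish --- into a continuous $L^\alpha$ lower bound via Marcinkiewicz-Zygmund, and then to read off the conclusion from the flatness hypothesis. Setting $P_q:=q^{-1/2}Q_q$, the key identity is $P_q(1)=|S_q|/\sqrt q$, which is of order $\sqrt q$ whenever $|S_q|/q$ stays bounded below.

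First, I would apply the first inequality of Lemma~\ref{MZ} with $n=q$ to the analytic polynomial $P_q$, whose degree is at most $q-1=n-1$, retaining on the left-hand side only the single term corresponding to $\xi_{q,0}=1$. This yields an inequality of the form
\[
\frac{|P_q(1)|}{q^{1/\alpha}} \;\le\; A'\,\|P_q\|_\alpha,
\]
equivalently $|S_q|/q\le A'\,q^{\,1/\alpha-1/2}\,\|P_q\|_\alpha$.

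Next, I would extract the consequence of $L^\alpha$-$c$-flatness: the reverse triangle inequality applied to $\bigl\||P_q|-c\bigr\|_\alpha\to 0$ gives $\|P_q\|_\alpha=\bigl\||P_q|\bigr\|_\alpha\to c$, so $\|P_q\|_\alpha$ remains uniformly bounded in $q$. Since $\alpha>2$ makes $1/\alpha-1/2<0$, the factor $q^{\,1/\alpha-1/2}$ decays to $0$, forcing $|S_q|/q\to 0$, as required.

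I do not foresee a genuine obstacle: the whole argument reduces to one application of Lemma~\ref{MZ} combined with the trivial identity $Q_q(1)=|S_q|$. The only points needing care are the choice of direction in the MZ lemma --- the one valid for all $\alpha\in[1,+\infty]$, bounding the discrete $\ell^\alpha$-average at the roots of unity above by the continuous $L^\alpha$-norm, so that one gets a \emph{lower} bound on $\|P_q\|_\alpha$ from a single point value --- and the admissibility of $n=q$, which holds since $\deg P_q\le q-1$.
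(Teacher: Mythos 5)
Your proof is correct and follows essentially the same route as the paper: apply the Marcinkiewicz--Zygmund inequality at the $q$-th roots of unity, keep only the node $z=1$ where $Q_q(1)=|S_q|$, and use $\|P_q\|_\alpha\to c$ together with $q^{1/\alpha-1/2}\to 0$ (equivalently $q^{1-\alpha/2}\to 0$) to force $|S_q|/q\to 0$. If anything, your version is stated more carefully, since you invoke the MZ inequality in the correct direction (discrete average bounded by $A'\,\|P\|_\alpha$), whereas the paper's displayed inequality places the constant on the wrong side and carries a spurious factor $2^\alpha$; neither affects the argument.
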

\begin{proof} For $\alpha>1$, by Lemma \ref{MZ}, we have

\begin{align}
\Big\|\frac{1}{\sqrt{q}}.Q_q(x)\Big\|_{\alpha}^{\alpha} &\geq A'_\alpha.
\frac{1}{q}. \Big|\frac{Q_q(1)}{\sqrt{q}}\Big|^\alpha=
A'_\alpha \frac{2^\alpha}{q^{1+\frac{\alpha}{2}}}.q^\alpha.\Big(\frac{\big|S_q\big|}{q}\Big)^\alpha\\
&\geq A'_\alpha \frac{2^\alpha}{q^{1-\frac{\alpha}{2}}}.\Big(\frac{\big|S_q\big|}{q}\Big)^\alpha
\end{align}
But, by our assumption $\alpha>2$ and  the sequence
$\ds \Big(\frac{1}{\sqrt{q}}.Q_q(x)\Big)$ is $L^\alpha$-$c$-flat. Therefore
$$\Big\|\frac{1}{\sqrt{q}}.Q_q(x)\Big\|_{\alpha}^{\alpha} \tend{q}{+\infty}c^\alpha,$$
and 
$$q^{1-\frac{\alpha}{2}}\tend{q}{+\infty}0.$$
We thus conclude
$$\frac{\big|S_q\big|}{q}\tend{q}{+\infty}0,$$
and the lemma has been proven.
\end{proof}

Under certain restricted conditions, the preceding proof enables us to improve the result in \cite{Abd-Nad-H} as follows.

\begin{Prop}\label{density-flat} Let $(Q_q)$ be a sequence of Bourgain-Newmann polynomials and assume that the density of $1$ is positive. Then, for any $\alpha>2$,
$$\frac{\Big\|Q_q\Big\|_\alpha}{\Big\|Q_q\Big\|_2}\tend{q}{+\infty}+\infty.$$
\end{Prop}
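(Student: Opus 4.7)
The plan is to run the Marcinkiewicz--Zygmund sampling trick from the proof of Lemma \ref{Den}, but instead of pairing it with a hypothetical flat limit, I would compare the lower bound it produces directly against the explicit value of $\|Q_q\|_2$, which for a Bourgain--Newman polynomial is just $\sqrt{|S_q|}$ by Parseval, where $S_q=\{0\le j\le q-1:\eta_j=1\}$.

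First I would record the trivial identities $Q_q(1)=|S_q|$ and $\|Q_q\|_2^2=|S_q|$. Then I would apply the first Marcinkiewicz--Zygmund inequality from Lemma \ref{MZ} to $Q_q$ on the $q$-th roots of unity $\xi_{q,j}$, keeping only the single sample at $\xi_{q,0}=1$. This yields
\[
\|Q_q\|_\alpha^\alpha \;\ge\; \frac{1}{(A')^\alpha}\cdot\frac{|Q_q(1)|^\alpha}{q} \;=\; \frac{1}{(A')^\alpha}\cdot\frac{|S_q|^\alpha}{q}.
\]
Dividing through by $\|Q_q\|_2^\alpha=|S_q|^{\alpha/2}$ gives
\[
\frac{\|Q_q\|_\alpha}{\|Q_q\|_2} \;\ge\; \frac{1}{A'}\cdot\frac{\sqrt{|S_q|}}{q^{1/\alpha}}.
\]

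To finish, I would invoke the positive-density hypothesis: there exist $\delta>0$ and $q_0$ with $|S_q|\ge\delta q$ for all $q\ge q_0$. The right-hand side is then bounded below by $\sqrt{\delta}\,q^{\,1/2-1/\alpha}/A'$, and since $\alpha>2$ the exponent $1/2-1/\alpha$ is strictly positive, so the ratio tends to $+\infty$.

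There is no serious obstacle. The one point deserving care is the direction of the Marcinkiewicz--Zygmund inequality invoked: one must use the version that bounds the discrete $\ell^\alpha$-average of samples by the $L^\alpha$-norm from above, so that after reversing one sample already dominates $\|Q_q\|_\alpha$ from below. This is exactly the first inequality in Lemma \ref{MZ}, which is valid for all $\alpha\in[1,+\infty]$, so the full range $\alpha>2$ is covered; nothing in the argument uses any other feature of the sequence $(\eta_j)$ beyond its being $\{0,1\}$-valued with positive density.
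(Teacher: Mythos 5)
Your argument is correct and follows essentially the same route as the paper's proof: apply the first Marcinkiewicz--Zygmund inequality of Lemma \ref{MZ} to $Q_q$, keep only the sample at $z=1$ where $Q_q(1)=|S_q|$, normalize by $\|Q_q\|_2=\sqrt{|S_q|}$, and use the positive-density hypothesis to get a lower bound of order $q^{1/2-1/\alpha}\to+\infty$ for $\alpha>2$. If anything, you are more careful than the paper about which side of the inequality the constant sits on (the paper writes $A'_\alpha$ as a factor on the lower bound, whereas your $1/(A')^\alpha$ is the correct placement), but this does not change the substance of either argument.
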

\begin{proof}
 Let  $\ds Q_q(z)=\sum_{j=0}^{q-1}\eta_jz^j,$ where 
$z \in \T,\eta_j \in \{0,1\}, j=0,\cdots, q-1,$ and put 
$$S_q=\{ 0 \leq j \leq q-1~~:~~\eta_j=1\}.$$ Then, by our assumption 
$$\frac{|S_q|}{q}\tend{q}{+\infty}d>0.$$
Applying Lemma \ref{MZ}, it follows
\begin{align}
\Big\|\frac{1}{\sqrt{|S_q|}}.Q_q(z)\Big\|_{\alpha}^{\alpha} \geq A'_\alpha.
\frac{1}{q}. \Big|\frac{Q_q(1)}{\sqrt{|S_q|}}\Big|^\alpha
\end{align}
\noindent{}But $|S_q| \sim d.q.$ Therefore,
\begin{align}
\Big\|\frac{1}{\sqrt{|S_q|}}.Q_q(z)\Big\|_{\alpha}^{\alpha} &\geq A'_\alpha.
\frac{1}{q}. \Big(\sqrt{|S_q|}\Big)^\alpha\\
&\gtrsim c_{\alpha} q^{\frac{\alpha}{2}-1} \tend{q}{+\infty}+\infty,
\end{align}

\noindent{}since $\alpha>2$. The proof of the proposition is complete.
\end{proof}

We further need a crucial lemma from \cite{Bon1} which corresponds to one of the main results therein. Before stating the lemma, we introduce the following definition

Let $\cp$ be the set of polynomials with coefficients $0$ or $1$. ( called also idempotents). For each $p >0$, define
$C_p$ as the largest number such that for every set $E$, $E \subset \T$  with $|E| > 0$, the inequality
$$\sup_{P \in \cp} \frac{\ds \int_E |P(z|^pdz}{\ds \int |P(z|^p dz} \geq C_p.$$
holds. $|E|$ denotes the Lebesgue measure of $E$ and the definition of $C_p$ is extended to the limit case $p = \infty$ in the usual way.

Following \cite{Bon2}, For $p > 0$, we say that there is $p$-concentration if there
exists a constant $c > 0$ so that for any symmetric non empty open set
$E$ one can find an idempotent $P \in \cp$ with
\begin{eqnarray}
\int_E |P(z)|^p dz \geq c \int |P(z)|^p dz.
\end{eqnarray}
Moreover, $c_p$ will denote the supremum of all such constants $c$. Correspondingly, cp is called the level of p-concentration. If $c_p = 1$, we say that there is full $p$-concentration. In \cite{Bon2}, the authors proved the following.

\begin{lem} \label{Bo-Re}
For all $0 < p < \infty$ we have $p$-concentration. Moreover,
if $p$ is not an even integer, then we have full concentration, i.e. $c_p = 1.$
When considering even integers, we have $c_2=\sup_{x \geq 0}\frac{\sin(x)^2}{\pi x}$ , then $0.495 <
c_4 \leq 1/2$, then for all other even integers $0.483 < c_{2k} \leq 1/2.$
\end{lem}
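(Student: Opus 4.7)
The statement has three parts to prove: positive $p$-concentration for all $0<p<\infty$, full concentration ($c_p=1$) when $p\notin 2\N$, and the quantitative bounds for even integers. The plan is to treat each by a different technique. For positive concentration (first claim), I would use a dilation trick on the Dirichlet kernel: by Lemma \ref{BZ}, $|D_N|^p$ concentrates its $L^p$-mass near $1\in\T$ on an arc of length $\sim 1/N$; the idempotent $D_N(z^d)$, whose modulus is invariant under rotation by the $d$-th roots of unity, distributes its mass equally near those $d$ roots. For any symmetric non-empty open $E\subset\T$, choose $d$ large enough that at least one $d$-th root of unity lies in $E$; then a fraction $\geq 1/d$ of the $L^p$-mass of $D_N(z^d)$ lies in $E$, uniformly for large $N$, giving $c_p>0$.

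For full concentration in part two, the key tool I would establish is an asymptotic $L^p$-orthogonality
\begin{equation*}
\bigl\|f + z^N g\bigr\|_p^p \longrightarrow \|f\|_p^p + \|g\|_p^p \quad \text{as } N \to \infty,
\end{equation*}
valid for any trigonometric polynomials $f,g$ precisely when $p/2\notin\N$, together with its local refinement where the ambient measure is replaced by $\mathbf{1}_E\,dz$ for $E$ symmetric open. The proof would expand $(|f|^2+|g|^2+2\Re(z^N g\bar f))^{p/2}$ as a convergent binomial series in the variable $2\Re(z^N g\bar f)/(|f|^2+|g|^2)$ and apply the Riemann--Lebesgue lemma termwise; the expansion has infinitely many nonzero terms precisely because $p/2\notin\N$, and controlling them uniformly in $N$ is the delicate point. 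Given the local version, full concentration follows by iteration: starting from an idempotent $P_0$ with concentration $c_0<1$ on $E$, the shifted sum $P_{k+1}:=P_k+z^{M_k}Q_k$ (with $Q_k$ an appropriately chosen idempotent and $M_k$ large enough to make spectra disjoint and activate the local orthogonality) strictly improves the ratio $\int_E|P_{k+1}|^p\big/\int|P_{k+1}|^p$, and the iteration drives it to $1$.

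For the third part (even integers), the upper bound $c_{2k}\leq 1/2$ is obtained by a symmetry argument. For symmetric $E$ and an idempotent $P$ of degree $N-1$, the reflected idempotent $P^*(z):=z^{N-1}\overline{P(\bar z)}$ has the same $L^{2k}$-mass distribution on $E$ as $P$; a Plancherel / convolution computation of $\|P\|_{2k}^{2k}$ as a count of representations of integers by $\pm$-signed sums from $\mathrm{supp}(P)$ forces the mass on $E$ and on its complement to balance, giving the $\leq 1/2$ bound. The exact identity $c_2=\sup_{x\geq 0}\sin^2(x)/(\pi x)$ follows from direct optimization using $\|P\|_2^2=|\mathrm{supp}(P)|$ together with Fej\'er-type test polynomials that saturate the bound. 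The lower bounds $c_4>0.495$ and $c_{2k}>0.483$ I would obtain by explicit constructions with sparse or Sidon-like idempotents, evaluated on small symmetric arcs and optimized numerically.

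The hardest step will be the asymptotic orthogonality in part two: for $p/2\notin\N$ the binomial expansion has infinitely many non-vanishing integrated moments, and justifying termwise passage to the limit via Riemann--Lebesgue uniformly in $N$ is subtle; moreover, upgrading this to a \emph{local} version on arbitrary symmetric open sets $E$ (needed to run the iteration above) requires extra uniform estimates. The sharp numerical lower bounds in part three are also intricate, and I would likely have to import the extremal constructions of Bonami--R\'ev\'esz \cite{Bon2} rather than re-derive them from scratch.
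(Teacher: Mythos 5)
The paper does not prove this lemma at all: it is quoted verbatim as one of the main results of Bonami--R\'ev\'esz \cite{Bon1,Bon2} (with the constant $c_2$ going back to D\'echamps-Gondim, Lust-Piquard and Queff\'elec \cite{DLPQ}), so the ``paper's proof'' is a citation. Measured as an actual proof, your sketch has genuine gaps in its two load-bearing steps. First, for positive $p$-concentration the dilation $D_N(z^d)$ does not suffice: by symmetry it puts only a fraction $\sim 1/d$ of its $L^p$-mass near any single $d$-th root of unity, and $d$ must grow without bound as the symmetric open set $E$ shrinks, so you obtain no constant $c>0$ uniform over all admissible $E$, which is exactly what the definition of $p$-concentration demands. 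The known constructions avoid this by using two scales (products such as $D_M(z)\,D_N(z^d)$ and refinements), which concentrate a \emph{fixed} proportion of the mass near one root of unity independently of $d$.

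Second, the proposed key tool for full concentration is false as stated: for trigonometric polynomials $f,g$ one has, by equidistribution of $(z,z^N)$ on $\T^2$,
\begin{equation*}
\big\|f+z^Ng\big\|_p^p \tend{N}{+\infty} \int_{\T}\int_{\T}\big|f(z)+e^{i\theta}g(z)\big|^p\,\frac{d\theta}{2\pi}\,dz,
\end{equation*}
and the inner average is not $|f(z)|^p+|g(z)|^p$ except for $p=2$ (take $f=g=1$ and $p=4$: the limit is $6$, not $2$). So the dichotomy ``orthogonality holds iff $p/2\notin\N$'' is not available, and the iteration built on its local version has no engine. The true even/non-even dichotomy in Bonami--R\'ev\'esz comes from a different mechanism: for $p=2k$ the function $|P|^{2k}=|P^k|^2$ has nonnegative Fourier coefficients (autocorrelation of the nonnegative coefficients of $P^k$), which is the arithmetic obstruction capping $c_{2k}\le 1/2$, while for non-even $p$ they construct highly concentrated idempotents with gaps. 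Your third part likewise only gestures at the upper bound and concedes importing the extremal constructions, so in the end the proposal neither reproduces the cited proof nor supplies an independent one; for the purposes of this paper a citation to \cite{Bon2} is the correct move.
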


The previous lemma addressed the $p$-concentration inequalities (see \cite{Bon1,Bon2} for details). 
The constant $c_2$ is due to D\'echamps-Gondim, Lust-Piquard and Queffélec \cite{DLPQ}. It is also related to the classical Weiner-Shapiro inequality \cite{Sha} (see also \cite{Bon3}).\\ 

We now proceed with the proof of our main theorem (Theorem \ref{main1}). 

\begin{proof} By \eqref{Obs}, we have 
\begin{align}
P_q(z)=\frac{1}{\sqrt{q}}\Big(2Q_q(z)-D_q(z)\Big),
\end{align}
where $\ds Q_q(z)=\sum_{j=0}^{q-1}\eta_jz^j$ and
$\eta_j \in \{0,1\}, j=0,\cdots, q-1.$ 
It follows that for any $q$-th root of unity $\xi_{j,q}, j=1,\cdots,q-1$, we  can write
\begin{align}\label{Unity}
P_q(\xi_{j,q})=\frac{-2}{\sqrt{q}}.Q_q(\xi_{j,q})
\end{align}
Let us assume that $(P_q)$ is $L^\alpha$-flat for $\alpha>2.$ Then, we have
$$\int \big|\big|P_q(z)\big|-1\big|^\alpha dz \tend{q}{+\infty}0.$$
We can thus extract a subsequence which we still denote by $(P_q)$ such that for almost all $ z \in \T$, we have
$$\big|P_q(z)\big| \tend{q}{+\infty}1.$$
Hence, for almost all $ z \in \T$,
$$ \Big|\frac{2}{\sqrt{q}}Q_q(z)\Big| \tend{q}{+\infty}1.$$
Since, for any $z \neq 1$,
$$\frac{|D_q(z)|}{\sqrt{q}} \tend{q}{+\infty}0.$$
Therefore, by appling Vitali's theorem \cite{Abd-nad3}, for any $\beta<2,$
$$ \Big\|\frac{2}{\sqrt{q}}Q_q(z)-1\Big\|_\beta \tend{q}{+\infty}0.$$
Moroever, by the triangle inequalities combined with Lemma \ref{BZ}, we have
\begin{align}
\Big|\frac{\Big\|\frac{2}{\sqrt{q}}Q_q(z)\Big\|_\alpha}{\Big\|\frac{D_q(z)}{\sqrt{q}}\Big\|_\alpha}-1\Big| \leq \frac{\Big\|P_q\Big\|_\alpha}{\Big\|\frac{D_q(z)}{\sqrt{q}}\Big\|_\alpha} \tend{q}{+\infty}0.
\end{align}
Whence 
\begin{eqnarray}\label{BNequivD}
\frac{\Big\|2.Q_q(z)\Big\|_\alpha}{\Big\|D_q(z)\Big\|_\alpha}  \tend{q}{+\infty}1.
\end{eqnarray}
Now, apply Lemma \ref{MZ} to the analytic polynomial $\frac{2}{\sqrt{q}}.Q_q$ to get 
\begin{align}\label{LittleQ}
\Big\|\frac{2}{\sqrt{q}}Q_q(z)\Big\|_{\alpha}^{\alpha} \leq A'_\alpha 
\Big(\frac{1}{q}.\Big|\frac{2}{\sqrt{q}}Q_q(1)\Big|^\alpha+
\frac{1}{q}\sum_{j=1}^{q-1}\Big|\frac{1}{\sqrt{q}}.Q(\xi_{j,q})\Big|^{\alpha}\Big)
\end{align}
We claim that we can assume without loss of generality that the sequence $$\Big(\frac{1}{q}\sum_{j=1}^{q-1}\Big|\frac{1}{\sqrt{q}}.Q(\xi_{j,q})\Big|^{\alpha}\Big)_{q \geq 1}$$ converge to some constante $C_\alpha$. Indeed, by applying once again Lemma \ref{MZ} to the the analytic polynomial $P_q$, we can write
\begin{align}
A'_\alpha. \frac{1}{q}\sum_{j=0}^{q-1}\Big|P_q(\xi_{j,q})\Big|^{\alpha}
\leq \Big\|P_q\Big\|_\alpha.
\end{align}
But under our assumption, 
for $\alpha >2$, we have
$$\int \big|\big|P_q(z)\big|-1\big|^\alpha dz \tend{q}{+\infty}0.$$
Therefore,
$$\Big\|P_q\Big\|_\alpha \tend{q}{+\infty}1.$$
We thus deduce that the sequence
$$\Big(\frac{1}{q}\sum_{j=0}^{q-1}\big|P(\xi_{j,q})\big|^{\alpha}\Big)_{q \geq 1}$$
is a bounded sequence. So that we can extract a convergent subsequence from it. But, from \eqref{Unity}, we have 
$$
\frac{1}{q}\sum_{j=0}^{q-1}\big|P(\xi_{j,q})\big|^{\alpha}=\frac{1}{q}\sum_{j=1}^{q-1}\Big|\frac{2}{\sqrt{q}}.Q(\xi_{j,q})\Big|^{\alpha}.$$
We can  thus assume without loss of generality that it converge to some positive constant $C_\alpha$. This proved the claim. We still need to estimate 
$$\frac{1}{q}.\Big|\frac{2}{\sqrt{q}}Q_q(1)\Big|$$
For that notice that we have
$$Q_q(1)=\sum_{j=0}^{q-1}\eta_j=|S_q|,$$
and by appealing once again to Lemma \ref{MZ}, it can shown that 
the density of $S_q$ converge to $\frac{1}{2},$
that is, 
\begin{align}\label{half}
\frac{|S_q|}{q}\tend{q}{+\infty}\frac{1}{2}.
\end{align}
Furthermore, for any $\delta>0$, for any mesurable set $E_\delta \subset (-\delta,+\delta)^{c}$, it easy to see that
\begin{align}\label{LC1}
\int_{E_\delta} |P_q(\theta)|^\alpha d\theta \tend{q}{+\infty} |E_\delta|, 
\end{align}
where $|E_\delta|$ is the Lebesgue measure of $E_\delta$. But, it is well known that 
\begin{align}\label{LC2}
\int_{(-\delta,+\delta)^{c}}\frac{1}{q^\frac{\alpha}{2}}|D_q(e^{i x)}|^\alpha dx
\leq \frac{1}{\sqrt{q}} \frac{2^\alpha}{\big(\sin(\delta/2)\big)^\alpha} dx
 \tend{q}{+\infty}0.
\end{align}
Now, applying the triangle inequalities, we obtain
\begin{align}\label{LC3}
\Big|\Big(\int_{E_\delta} \big|\frac{2}{\sqrt{q}}.Q_q(x)\big|^\alpha dx\Big)^{\frac{1}{\alpha}}-\Big(\int_{E_\delta}\big|\frac{1}{\sqrt{q}}D_q(e^{i x)}\big|^\alpha dx\Big)^{\frac{1}{\alpha}}\Big|\\
\leq \Big\|\1_{E_\delta}.P_q(x)\Big\|_\alpha \leq 
\Big(\int_{E_\delta} \big|\frac{2}{\sqrt{q}}.Q_q(x)\big|^\alpha dx\Big)^{\frac{1}{\alpha}}+\Big(\int_{E_\delta}\big|\frac{1}{\sqrt{q}}D_q(e^{i x)}\big|^\alpha dx\Big)^{\frac{1}{\alpha}}
\end{align}
Letting $q$ goes to infinty, we see  
\begin{align}\label{LC4}
\int_{E_\delta} \Big|\frac{2}{\sqrt{q}}.Q_q(x)\Big|^\alpha dx \tend{q}{+\infty}
|E_\delta|.
\end{align}
The same reasonning yields 
$$\frac{\big\|\frac{2}{\sqrt{q}}.Q_q\big\|_\alpha}{\big\|\frac{1}{\sqrt{q}}.D_q\|_\alpha}= \frac{\big\|2.Q_q\big\|_\alpha}{\big\|D_q\|_\alpha}\tend{q}{+\infty} 1.$$

\noindent{}From this combined with \eqref{LC4} and Lemma \ref{BZ}, we obtain
\begin{eqnarray}\label{full1}
\frac{\ds \int_{E_\delta} \Big|\frac{2}{\sqrt{q}}.Q_q(x)\Big|^\alpha dx }{\big\|\frac{2}{\sqrt{q}}.Q_q\|_\alpha^\alpha}\tend{q}{+\infty}0.
\end{eqnarray}
Whence
\begin{eqnarray}\label{full2}
\frac{\ds  \int \Big|\frac{2}{\sqrt{q}}.Q_q(x)\Big|^\alpha- \int_{(-\delta,\delta)} \Big|\frac{2}{\sqrt{q}}.Q_q(x)\Big|^\alpha dx }{\big\|\frac{2}{\sqrt{q}}.Q_q\|_\alpha^\alpha}\tend{q}{+\infty}0.
\end{eqnarray}
In other words, we can write
\begin{eqnarray}\label{full3}
\frac{\ds \int_{(-\delta,\delta)} \Big|\frac{2}{\sqrt{q}}.Q_q(x)\Big|^\alpha dx }{\big\|\frac{2}{\sqrt{q}}.Q_q\|_\alpha^\alpha}\tend{q}{+\infty}1.
\end{eqnarray}
Now, by taking $\alpha=2p$, $p>1$ a positive integer and applying  Lemma \ref{Bo-Re}, we get a contradiction. 
\end{proof}
\noindent{}We conjecture the following 
\begin{conj} There is no $L^\alpha$-flat polynomails from the class of Erd\H{o}s-Littlewood polynomials (class $\Lit$) for any $\alpha <4$.
\end{conj}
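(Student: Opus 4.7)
The plan is to follow the proof of Theorem~\ref{main1} verbatim as far as it goes and then attack what breaks, in two separate regimes. For $2<\alpha<4$ every step from the identity $P_q=(2Q_q-D_q)/\sqrt q$ through the Vitali-type convergence, the Marcinkiewicz--Zygmund sampling (valid for all $\alpha>1$), the density identification~\eqref{half} (which only needs $\alpha>2$), and the concentration identity~\eqref{full3} goes through verbatim; thus $L^\alpha$-flatness of $(P_q)\subset\Lit$ again produces idempotents $Q_q$ with $|S_q|/q\to 1/2$ satisfying
$$\frac{\int_{-\delta}^{\delta}|Q_q(x)|^\alpha\,dx}{\int_{\T}|Q_q(x)|^\alpha\,dx}\tend{q}{+\infty}1$$
for every $\delta>0$. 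The obstruction is that Lemma~\ref{Bo-Re} yields $c_\alpha<1$ only at even integer exponents; at all other $\alpha$ the full-concentration regime $c_\alpha=1$ blocks any direct contradiction.

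The new ingredient I would aim for is a \emph{density-constrained} sharpening of Lemma~\ref{Bo-Re}: for every $\alpha>0$ and every $d\in(0,1)$ there exists $c(\alpha,d)<1$ such that any idempotent $Q=\sum_{j\in S}z^j$ of degree at most $q-1$ with $d\le|S|/q\le 1-d$ satisfies
$$\sup_{\delta>0}\int_{-\delta}^{\delta}|Q(x)|^\alpha\,dx\le c(\alpha,d)\int_{\T}|Q(x)|^\alpha\,dx.$$
Since $|S_q|/q\to 1/2$, fixing any $d<1/2$ makes the hypothesis hold eventually and the bound contradicts the display above. The motivation is that the extremizers realising $c_\alpha=1$ in~\cite{Bon2} are Riesz-product-type idempotents supported on lacunary sets of vanishing density, so a positive lower bound on the density should quantitatively destroy full concentration. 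Concretely I would try to interpolate between the known bound $c_4\le 1/2$ at $\alpha=4$ and the elementary estimate at $\alpha=2$, where $\int_{\T}|Q|^2=|S|$ while $\int_{-\delta}^{\delta}|Q|^2=\sum_{j,k\in S}\mathrm{sinc}((j-k)\delta)$ is bounded away from $|S|$ whenever $|S|/q$ stays positive.

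For $\alpha\le 2$ the situation is considerably harder: the derivation of $|S_q|/q\to 1/2$ via Marcinkiewicz--Zygmund requires $\alpha>2$, and $L^\alpha$-flatness in this range already implies $L^1$-flatness, so the subcase $\alpha\in[1,2]$ is at least as hard as the Bourgain--Guenais/Banach--Rokhlin problem on simple Lebesgue spectrum explicitly left open in~\cite{Ab}. The principal obstacle throughout is therefore the density-sensitive refinement of Lemma~\ref{Bo-Re}: for non-even $\alpha$ it appears to demand new extremal arguments beyond the Riesz-product constructions of~\cite{Bon2}, and for $\alpha\le 2$ one additionally loses the density identification~\eqref{half} that funnels the problem into a concentration question in the first place.
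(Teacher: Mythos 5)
This statement is posed in the paper only as a conjecture: the paper proves non-flatness solely for even exponents $\alpha=2p$, $p>1$, and explicitly disclaims any insight for $\alpha<2$, so there is no proof of the paper to match your argument against, and your proposal does not close the gap either. For $2<\alpha<4$ you correctly observe that the paper's argument runs unchanged up to the concentration statement \eqref{full3} (the evenness of $\alpha$ is used only in the final appeal to Lemma \ref{Bo-Re}), but the new ingredient you then postulate --- that a density constraint $d\le |S|/q\le 1-d$ alone forces a concentration bound $c(\alpha,d)<1$ near the origin --- is false as stated. The truncated Dirichlet kernel $Q_q=D_{\lceil q/2\rceil}$, viewed as an idempotent of degree at most $q-1$, has density exactly about $\tfrac12$, yet for every fixed $\delta>0$ and every $\alpha>1$ one has $\int_{-\delta}^{\delta}|Q_q|^{\alpha}\,dx\big/\int_{\T}|Q_q|^{\alpha}\,dx \tend{q}{+\infty}1$, since all but $O_{\delta}(1)$ of its $L^\alpha$ mass sits in an $O(1/q)$ neighbourhood of $0$. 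Hence density alone can never ``quantitatively destroy full concentration''; any workable refinement must use the extra structure the flatness hypothesis provides, e.g.\ the norm comparison \eqref{BNequivD} with the full Dirichlet kernel (which the truncated kernel violates by a factor $2^{1/\alpha}$) or arithmetic information about $S_q$. Note also that the upper bounds $c_{2k}\le 1/2$ of Lemma \ref{Bo-Re} are witnessed by symmetric open sets located away from the origin, not by neighbourhoods of $0$ (near $0$ every idempotent class contains fully concentrating examples, again the Dirichlet kernel), so ``interpolating'' the even-integer bound down to non-even $\alpha$ is not a routine matter; this is precisely why the non-even case resists the method.

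For $\alpha\le 2$ you offer no argument at all, and you rightly point out that this range contains the $L^1$-flatness question tied to the Banach--Rokhlin spectral problem, which the paper leaves open. So the proposal is a research programme whose central lemma needs to be reformulated (with hypotheses stronger than a density bound) before it can even be attempted, and whose second half is missing entirely; it should not be presented as a proof of the conjecture.
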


\begin{thank}The author wishes to express his thanks to Jean-Marie Strelcyn for bring to his attention that the spectral Banach problem in ergodic theory can be found in Ulam's book \cite{Ulam}. 
The author would like to express their heartfelt thanks to Michael Lin for the invitation to the University of Ben-Gurion, where this work was revisited. 
\end{thank}


\begin{thebibliography}{99-}
\bibitem{AAJRS}B. Anderson, J. M. Ash, R. L. Jones, D. G. Rider, and B.
Saffari, Exponential sums with coefficients $0$ or $1$ and
concentrated $L^p$ norms, Ann. Inst. Fourier, Vol 57 (2007) no. 5, pp. 1377-1404.
\bibitem{Ab}
~el Abdalaoui, e.\,H.,
    \textit{Ergodic Banach problem, flat polynomials and Mahler's measures with combinatorics},
    arXiv:1508.06439v5 [math.DS].
 https://arxiv.org/abs/2210.15480

\bibitem{AE}
~el Abdalaoui, e.\,H.,
\textit{On the Erd\H{o}s flat polynomials problem, Chowla conjecture and Riemann Hypothesis},  	arXiv:1609.03435 [math.CO].
\bibitem{Ab2}
~el Abdalaoui, e.\,H.,\textit{$L^1$-flat polynomials and simple Lebesgue spectrum for conservative maps exist: A simple proof}, arXiv:2210.15480 [math.DS]
	\bibitem{Abd-Nad}
~el Abdalaoui, e.\,H. \& Nadkarni, M.,
    \textit{Some notes on flat polynomials},
    preprint 2014,
    \url{http://arxiv.org/abs/1402.5457}
\bibitem{Abd-Nad-H}
~el Abdalaoui, e.\,H. \& Nadkarni, M.,\textit{Some notes on flat polynomials}, Jul, 2015, 
https://hal.science/hal-01178322v1/file/Rouen2-July-17-2015.pdf
	\bibitem{abd-nad2}
	~el Abdalaoui, E.\,H., Nadkarni, M.,
    \textit{On flat polynomials with non-negative coefficients},
    preprint 2015, \url{https://arxiv.org/abs/1508.00417}
 \bibitem{Abd-nad3}
~el Abdalaoui, E.\,H., Nadkarni, M. \textit{A class of littlewood polynomials that are not $L^\alpha$-flat},  Uniform Distribution Theory, vol. 15, no. 1, 2020, pp. 51–74. https://doi.org/10.2478/udt-2020-0003. 
\bibitem{Ba}
G. Bachelard, \emph{ La Po\'etique de L'Espace,} PUF, France, 1958.

\bibitem{Bon1}
Bonami, A. and R\'ev\'esz, Sz., Integral concentration of idempotent trigonometric polynomials with gaps, Am. J. Math., 131 (2009), 1065–1108.

\bibitem{Bon2}
Bonami, A., R\'ev\'esz, S.G., \emph{Concentration of the Integral Norm of Idempotent.} In: Barral, J., Seuret, S. (eds) Recent Developments in Fractals and Related Fields. Applied and Numerical Harmonic Analysis. Birkh\"{a}user Boston. https://doi.org/10.1007/978-0-8176-4888-$6_8$.
\bibitem{Bon3}
Bonami, A., R\'ev\'esz, S.G., \emph{Failure of Wiener’s property for positive definite periodic functions}, C. R. Acad. Sci. Paris, Ser. I 346 (2008) 39-44.
\bibitem{B}
J. Bourgain,\emph{ On the spectral type of Ornstein class one transformations}, Israel J.\ Math., {\textbf{ 84}} (1993), 53-63.

\bibitem{G}
M. ~Guenais, {\em Morse cocycles and simple Lebesgue spectrum } Ergodic Theory Dynam. Systems, \textbf{19} (1999), no. 2, 437-446.
   
\bibitem{DLPQ}
D\'echamps-Gondim, M. Lust-Piquard, F. and Queff\'elec, H., 
d'exponentielles, C. R. Acad. Sci. Paris,. I. Math., 297
(1983), 153–157.
\bibitem{E}
	Erd\H{o}s, P\!.
    \textit{Some unsolved problems},
    Michigan Math. J. \textbf{4} (1957),
    291--300.
\bibitem{Es}
 Ess\'ena, M.; Sheab; D.; Stantonc, C.; \emph{Best constant inequalities for conjugate functions}, J. Comp. App. Math., 105 (1999) 257-264.
\bibitem{L1}
 Littlewood, J. E., \textit{ On the mean values of certain trigonometric polynomials.} J. London Math. Soc. 36 (1961), 307–334. 
\bibitem{L2}   
Littlewood, J.\,E.
    \textit{On polynomials $\ds \sum^n \pm  z^m$,$\ds \sum^n e^{\alpha_m i} z^m$, $z = e^{\theta i}$},
    J. London Math. Soc. \textbf{41} (1966),367--376.
\bibitem{MZ}
Marcinkiewicz, J.; Zygmund, A., \textit{Mean values of trigonometrical polynomials}, Fundam. Math. 28, 131-166 (1936). 
\bibitem{N-B}
	Newman, D.\,J. Byrnes, J.\,S.,
    The $L^4$ norm of a polynomial with coefficients $\pm 1$,
	Amer. Math. Monthly {\textbf 97} (1990), no 1,42--45. https://doi.org/10.1080/00029890.1990.11995544.
\bibitem{O}
Odlyzko, A., Search for Ultraflat Polynomials with Plus and Minus One Coefficients. In: Butler S, Cooper J, Hurlbert G, eds. Connections in Discrete Mathematics: A Celebration of the Work of Ron Graham. Cambridge University Press; 2018:39-55.

\bibitem{Ulam}
S.~M.~ Ulam, {\em Problems in modern mathematics,} Science Editions John Wiley \& Sons, Inc., New York 1964.
\bibitem{Sha}
H. Shapiro, {\em Majorant problems for Fourier coefficients,} Quart. J. Math. Oxford (2) 26 (1975) 9-18.
\bibitem{S}
M. R. Schroeder, {\em Number Theory in Science and Communication: With Applications
in Cryptography, Physics, Biology, Digital Information, and Computing,} Springer, 1984.
\bibitem{XS}
N. Xiang and G. M. Sessler, eds., {\em Acoustics, Information, and Communication: Memorial Volume in Honor of Manfred R. Schroeder,} Springer, 2014.
\bibitem {Zyg}
	Zygmund, A.,
    \textsl{Trigonometric Series}  {Vol. I \& II}. Second edition.
	Cambridge Univ. Press,
    Cambridge,
    1959.
\end{thebibliography}
\end{document}